\newcommand{\C}{\ensuremath{\mathbb{C}}}
\newcommand{\A}{\ensuremath{\mathbb{A}}}
\newcommand{\D}{\ensuremath{\mathbb{D}}}
\newcommand{\op}[1]{\ensuremath{{#1}^{\mathrm{op}}}}
\newcommand{\psh}[1]{\ensuremath{[\op{#1},\mathsf{Set}]}}
\newcommand{\Set}{\ensuremath{\mathsf{Set}}}
\newcommand{\set}{\ensuremath{\mathsf{set}}}
\newcommand{\Cat}{\ensuremath{\mathsf{Cat}}}
\newcommand{\xypbcorner}[1][dr]{\save*!/#1-1.5pc/#1:(-1,1)@^{|-}\restore}
\newcommand{\y}{\ensuremath{\mathsf{y}}} 
\newcommand{\yon}{\ensuremath{\mathsf{y}}} 
\newcommand{\hook}{\ensuremath{\hookrightarrow}}
\newcommand{\mono}{\ensuremath{\rightarrowtail}}
\newcommand{\ra}{\ensuremath{\rightarrow}}
\newcommand{\cof}{\ensuremath{\rightarrowtail}}
\renewcommand{\to}{\ensuremath{\rightarrow}}
\newcommand{\too}{\ensuremath{\longrightarrow}}
\newcommand{\U}{\ensuremath{\mathcal{U}}}
\newcommand{\UU}{\ensuremath{\,\dot{\mathcal{U}}}}
\newcommand{\SSet}{\ensuremath{\,\dot{\Set}}}
\newcommand{\sset}{\ensuremath{\,\dot{\set}}}
\newcommand{\V}{\ensuremath{\mathcal{V}}}
\newcommand{\VV}{\ensuremath{\dot{\mathcal{V}}}}
\newcommand{\Fib}{\ensuremath{\mathsf{Fib}}}
\newcommand{\elem}[1]{\textstyle\int\!{#1}}
\newcommand{\pbmark}{\ar[dr, phantom, "\lrcorner" very near start, shift right=.5ex]}	
\newtheorem{theorem}{Theorem}
\newtheorem*{theorem*}{Theorem}
\newtheorem{proposition}[theorem]{Proposition} 
\newtheorem{lemma}[theorem]{Lemma}
\theoremstyle{remark}
\newtheorem{remark}[theorem]{Remark} 
\newtheorem*{remarks*}{Remarks}
\theoremstyle{definition}
\begin{document}

\title{On Hofmann-Streicher universes}
\author{Steve Awodey}
\maketitle

\centerline{\emph{to the memory of Erik Palmgren}}

\medskip

\begin{abstract}
We have another look at the construction by Hofmann and Streicher of a universe $(U,{\mathsf{E}l})$ for the interpretation of Martin-L\"of type theory in a presheaf category $\psh{\C}$.  It turns out that $(U,{\mathsf{E}l})$ can be described as the \emph{categorical nerve} of the classifier $\dot{\Set}^{\mathsf{op}} \to \op{\Set}$ for discrete fibrations in $\Cat$, where the nerve functor is right adjoint to the so-called ``Grothendieck construction'' taking a presheaf $P : \op{\C}\to\Set$ to its category of elements $\int_\C P$.  We also consider change of base for such universes, as well as universes of structured families, such as fibrations.
\end{abstract}



\noindent Let $\widehat{\C} = \psh{\C}$ be the category of presheaves on the small category~$\C$.

\subsection*{1. The Hofmann-Streicher universe}\label{sec:U}

In \cite{HS:1997} the authors define a (type-theoretic) \emph{universe}  $(U, {\mathsf{E}l})$ 
with $U\in\widehat{\C}$ and $\textstyle{\mathsf{E}l} \in \widehat{\int_\C U}$ as follows. For $I\in\C$, set
 \begin{align}
	U(I)\ &=\ \Cat\big(\op{\C/_I}, \Set\big) \label{eq:universeob}\\ 
 	{\mathsf{E}l}(I, A)\ &=\ A(id_I) \label{eq:universeel}
 \end{align}
with an evident associated action on morphisms.  A few comments are required: 
\begin{enumerate}
\item  In \eqref{eq:universeob}, we have taken the underlying set of objects of the category $\widehat{\C/_I}=\psh{\C/_I}$ (in contrast to the specification in \cite{HS:1997}).
\item In \eqref{eq:universeel}, and throughout, the authors steadfastly adopt a ``categories with families'' point of view in describing a morphism 
\begin{equation}\label{eq:universe}
E \too U
\end{equation}
in $\widehat{\C}$ equivalently as an object in
\begin{equation}\label{eq:elements}\textstyle
 \widehat{\C}/_U\ \simeq\ \widehat{\int_{\C}U}\,,
\end{equation}
that is, as a presheaf on the \emph{category of elements} $\int_{\C}U$, where:
\[
E(I)\ =\ {\textstyle \coprod_{A\in U(I)}{\mathsf{E}l}(I, A)}\,.
\]
Thus the argument $(I, A) \in \int_{\C}U$ in \eqref{eq:universeel} consists of an object $I\in\C$ and an element $A\in U(I)$.
\item In order to account for size issues, the authors assume a Grothendieck universe $\U$ in $\Set$, the elements of which are called \emph{small}.  The category $\C$ is then assumed to be small, as are the values of the presheaves, unless otherwise stated.  
\end{enumerate}

The presheaf $U$, which is not small, is regarded as the Grothendieck universe $\U$ ``lifted'' from $\Set$ to $\psh{\C}$.
We will analyse the construction of $(U, {\mathsf{E}l})$  from a slightly different perspective in order to arrive at its basic property as a classifier for small families in $\widehat\C$. 

\subsection*{2. An unused adjunction}

For any presheaf $X$ on $\C$, recall that the category of elements is the comma category,
\[\textstyle
\int_\C X\ =\ \yon_\C/_X\,,
\] 
where $\yon_\C : \C \to \psh\C$ is the Yoneda embedding, which we may supress and write simply $\C/_X$. 
While the category of elements $\int_\C X$ is used in the specification of the Hofmann-Streicher universe $(U, {\mathsf{E}l})$ at the point \eqref{eq:elements}, the authors seem to have missed a trick  which would have simplified things:

\begin{proposition}[\cite{G:1983},\S{28}]
The category of elements functor $\int_\C : \widehat\C \too \Cat$ has a right adjoint, which we denote
\[
\nu_\C : \Cat \too \widehat\C\,.
\]
For a small category $\A$, we call the presheaf $\nu_\C(\A)$ the \emph{$\C$-nerve} of $\A$.
\end{proposition}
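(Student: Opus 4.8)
The plan is to produce the right adjoint explicitly and then verify the adjunction, guided by the general nerve--realization paradigm: a functor $j\colon \C \to \mathcal{E}$ into a cocomplete category induces a ``nerve'' $\mathcal{E} \to \widehat{\C}$, $e \mapsto \mathcal{E}(j(-), e)$, right adjoint to the cocontinuous extension of $j$ along the Yoneda embedding. Here the relevant functor is $j\colon \C \to \Cat$, $I \mapsto \C/_I$, and the point is that $\int_\C$ is precisely this cocontinuous extension, since the category of elements of a representable is a slice, $\int_\C \yon_\C I \cong \C/_I$ (immediate from $\int_\C X = \yon_\C/_X$ and fullness of Yoneda). This already forces the definition of the adjoint: if $\nu_\C$ is to be right adjoint to $\int_\C$, then by Yoneda its value must be $\nu_\C(\A)(I) \cong \widehat{\C}(\yon_\C I, \nu_\C \A) \cong \Cat(\int_\C \yon_\C I, \A) = \Cat(\C/_I, \A)$. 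So I would \emph{define}
\[
\nu_\C(\A)(I)\ =\ \Cat(\C/_I, \A),
\]
a presheaf on $\C$ via the postcomposition functors $\C/_I \to \C/_J$ attached to $f\colon I \to J$, and functorial in $\A$ by postcomposition.

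Next I would establish the natural bijection $\Cat\bigl(\int_\C X, \A\bigr) \cong \widehat{\C}(X, \nu_\C \A)$ explicitly. Given a functor $F\colon \int_\C X \to \A$, define a transformation $\phi$ by sending $x \in X(I)$ to the functor $\phi_I(x)\colon \C/_I \to \A$ whose value at an object $g\colon K \to I$ is $F(K, X(g)(x))$, with the evident action on the triangles of $\C/_I$. Conversely, a transformation $\phi$ determines $F$ by evaluation at terminal objects: $F(I,x) = \phi_I(x)(\mathrm{id}_I)$ on objects, and on a morphism $f\colon (I,x) \to (J,y)$ of $\int_\C X$ (so $X(f)(y)=x$) by applying $\phi_J(y)$ to the canonical map $f \to \mathrm{id}_J$ in $\C/_J$. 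It is worth noting that the object-level formula $F(I,x) = \phi_I(x)(\mathrm{id}_I)$ is exactly the pattern of the Hofmann--Streicher ${\mathsf{E}l}(I,A) = A(\mathrm{id}_I)$, which is what ties this adjunction back to the universe construction.

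The routine-but-careful part will be checking that these two assignments are mutually inverse, functorial (e.g. that the $F$ recovered from $\phi$ respects composition, using that $\mathrm{id}_J$ is terminal in $\C/_J$ and that naturality of $\phi$ reads $\phi_I(X(f)(y)) = \phi_J(y)\circ(\C/_f)$), and natural in both $X$ and $\A$. The one genuinely structural input --- and the step I expect to be the real obstacle were one instead to argue abstractly via the adjoint functor theorem --- is the cocontinuity of $\int_\C$: that it preserves the canonical colimit $X \cong \operatorname{colim}_{(I,x) \in \int_\C X} \yon_\C I$, so that $\int_\C X \cong \operatorname{colim}_{(I,x)} \C/_I$ in $\Cat$. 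Since colimits in $\Cat$ are delicate, I would prefer to sidestep this by verifying the explicit bijection above directly, from which cocontinuity of $\int_\C$ then follows for free as a left adjoint; either way the value $\nu_\C(\A)(I) = \Cat(\C/_I, \A)$ is the same.
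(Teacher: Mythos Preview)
Your proposal is correct, and it arrives at the same formula $\nu_\C(\A)(I) = \Cat(\C/_I,\A)$ as the paper, but the verification of the adjunction takes a genuinely different route. The paper argues via the standard nerve--realization template: it checks the adjunction on representables using Yoneda and $\int_\C \y c \cong \C/_c$, and then extends to arbitrary $X$ by writing $X$ as a colimit of representables and invoking that $\int_\C$ preserves colimits, the latter justified by the one-line observation $\varinjlim_{d\in\D}\D/_d \cong \D$ in $\Cat$. You instead write down the bijection $\Cat(\int_\C X,\A)\cong\widehat\C(X,\nu_\C\A)$ explicitly for all $X$ at once and check it by hand, deliberately sidestepping the colimit computation in $\Cat$. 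What this buys you is that the delicate step---colimits in $\Cat$---is never needed, and the counit formula $\epsilon(c,f)=f(1_c)$ (which the paper records separately after the proof) drops out of your ``evaluate at the terminal object'' direction. What the paper's argument buys is brevity and a clear placement of the construction in the realization--nerve paradigm, which is conceptually useful for the later base-change section. Both are perfectly sound; yours is the more self-contained of the two.
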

\begin{proof}
As suggested by the name, the adjunction $\int_\C\! \dashv \nu_\C$ can be seen as the familiar ``realization $\dashv$ nerve'' construction with respect to the covariant functor $\C/- : \C\to\Cat$, as indicated below.
\begin{equation}\label{eq:nerve}\textstyle
\begin{tikzcd}
	 \widehat\C \ar[rr, swap,"\int_\C"] &&  \ar[ll, swap,bend right=20, "{ \nu_\C}"] \Cat\\  
	 \\
	\C \ar[uu, hook, "\yon"] \ar[rruu, swap,"{\C/_{-}}"] &&
 \end{tikzcd}
 \end{equation}
In detail, for  $\A\in\Cat$ and $c\in\C$, let $\nu_{\C}(\A)(c)$ be the Hom-set of functors,
\begin{align*}
\nu_\C(\A)(c) &= \Cat\big( {\C/_c}\,,\, \A \big)\,,
\end{align*}
with contravariant action on $h : d\to c$ given by pre-composing a functor $P : {\C/_c}\to\A$  with the post-composition functor
\[
{\C/_h} : {\C/_d}\too {\C/_c} \,.
\]
For the adjunction, observe that the slice category $\C/_c$ is the category of elements of the representable functor $\y{c}$\,,
\[\textstyle
\int_\C\y{c}\ \cong\ \C/_c\,.
\]
 Thus for representables $\y{c}$\,, we have the required natural isomorphism
 \[\textstyle
 \widehat\C\big( \y{c}\,,\, \nu_\C(\A) \big)\ \cong\ \nu_\C(\A)(c)\  =\ \Cat\big( {\C/_c}\,,\, \A \big)\ \cong\ \Cat\big( \int_\C\y{c}\,,\, \A \big)\,.
  \]
For arbitrary presheaves $X$, one uses the presentation of $X$ as a colimit of representables over the index category $\int_\C X$, and the easy to prove fact that $\int_\C$ itself preserves colimits.  Indeed, for any category $\D$, we have an isomorphism in $\Cat$,
\[
\varinjlim_{d\in\D}\,\D/_d \ \cong\ \D\,.
\]
\end{proof}

When $\C$ is fixed, as here, we may omit the subscript from the notation $\yon_\C$ and  $\int_\C$ and $\nu_\C$.  The unit and counit maps of the adjunction $\int \dashv \nu$,
\begin{align*}\textstyle
\eta :&\ \textstyle  X \too \nu{\elem{X}}\,, \\
\epsilon :&\ \textstyle  \elem\nu\A \too \A\,,
\end{align*}
 are as follows.  At $c\in\C$, for $x : \y{c}\ra X$, the functor $(\eta_X)_c(x) : \C/_c \to \C/_X$ is just composition with $x$, 
\begin{equation}\label{eq:eta}
(\eta_X)_c(x) = \C/_x : \C/_c \too \C/_X\,.
\end{equation}
For $\A\in\Cat$, the functor $ \epsilon : \int\nu\A \to \A$ takes a pair $(c\in\C, f : \C/_c \to \A)$ to the object $f(1_c) \in \A$,
\[
\epsilon(c,f) = f(1_c).
\]
\begin{lemma}\label{lemma:natpb}
For any $f : Y\to X$, the naturality square below is a pullback.
\begin{equation}\label{eq:naturality}\textstyle
\begin{tikzcd}
	 Y \ar[d, swap,"f"] \ar[r, "{\eta_Y}"] & \nu{\int\!{Y}} \ar[d, "{ \nu{\int\!{f}}}"]\\  
	X \ar[r, swap,"{\eta_X}"] &   \nu{\int\!{X}}.
 \end{tikzcd}
 \end{equation}
\end{lemma}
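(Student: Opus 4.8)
The plan is to check the pullback pointwise. Since limits in a presheaf category are computed objectwise, it suffices to fix $c\in\C$ and show that evaluating \eqref{eq:naturality} at $c$ yields a pullback of sets
\[
\begin{tikzcd}
Y(c) \ar[r, "{(\eta_Y)_c}"] \ar[d, swap, "f_c"] & \Cat(\C/_c,\, \C/_Y) \ar[d, "{(\int\! f)\,\circ\,-}"] \\
X(c) \ar[r, swap, "{(\eta_X)_c}"] & \Cat(\C/_c,\, \C/_X)\,,
\end{tikzcd}
\]
where by \eqref{eq:eta} the horizontal maps send $y$ to $\C/_y$ and $x$ to $\C/_x$, and the right-hand map is post-composition with $\int\! f = \C/_f$. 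Unwinding the definitions, $\C/_y$ sends an object $h\colon d\to c$ of $\C/_c$ to $(d,\,Y(h)(y))\in\C/_Y$ and commutes with the projections to $\C$, while $\C/_f$ sends $(d,t)$ to $(d,f_d(t))$.

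The key observation I would use is that $\C/_c$ has a terminal object, the identity $1_c\colon c\to c$, and that both $\C/_x$ and $\C/_f$ strictly commute with the canonical projections to $\C$. From this, injectivity of $(\eta_Y)_c$ is immediate: since $\C/_y(1_c)=(c,y)$, the element $y$ is recovered from the functor $\C/_y$ by evaluation at the terminal object, so the comparison map from $Y(c)$ into the pullback (which sends $y$ to the pair $(f_c(y),\C/_y)$) is injective.

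For surjectivity onto the pullback, suppose given $x\in X(c)$ together with a functor $g\colon\C/_c\to\C/_Y$ satisfying $\C/_f\circ g=\C/_x$. The main point — and the step requiring the most care — is that this constraint forces $g$ to respect the projections to $\C$: since $\C/_x$ and $\C/_f$ do, comparing underlying objects and arrows shows that $g$ sends each object $h\colon d\to c$ of $\C/_c$ to a pair $(d,t_h)$ with $f_d(t_h)=X(h)(x)$, and sends the unique arrow from $h$ to the terminal object $1_c$ — namely $h$ itself — to an arrow of $\C/_Y$ lying over $h$. Put $y:=t_{1_c}\in Y(c)$, so that $g(1_c)=(c,y)$ and, taking $h=1_c$, $f_c(y)=X(1_c)(x)=x$. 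Applying $g$ to this arrow from $h$ to $1_c$ then yields a morphism $(d,t_h)\to(c,y)$ lying over $h$ in $\C/_Y$, which says precisely that $t_h=Y(h)(y)$. Hence $g(h)=(d,Y(h)(y))=\C/_y(h)$ for every object, and likewise on arrows, so $g=\C/_y=(\eta_Y)_c(y)$. This produces the element of $Y(c)$ lying over the pair $(x,g)$, which together with the injectivity above shows that \eqref{eq:naturality} is a pullback.
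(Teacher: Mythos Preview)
Your proof is correct and follows essentially the same line as the paper's: evaluate at $c\in\C$, use the terminal object $1_c\in\C/_c$ to recover the element from the functor, and observe that any functor compatible with the projections to $\C$ is uniquely of the form $\C/_{(-)}$. The only difference is that the paper first reduces to the case $f:X\to 1$ via the pullback pasting lemma (stacking the naturality squares for $Y\to 1$ and $X\to 1$), which slightly streamlines the bookkeeping since the constraint becomes simply $U_X\circ F = U_c$ rather than $\C/_f\circ g = \C/_x$; but the substance of the argument is the same.
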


\begin{proof}
It suffices to prove this for the case $f : X\ra 1$.  Thus consider the square 
\begin{equation}\label{eq:naturalityobject}\textstyle
\begin{tikzcd}
	 X \ar[d] \ar[r, "{\eta_X}"] & \nu{\int\!{X}} \ar[d]\\  
	1\ar[r, swap,"{\eta_1}"] &   \nu{\int\! 1}.
 \end{tikzcd}
 \end{equation}
Evaluating at $c\in\C$ and applying \eqref{eq:eta} then gives the following square in $\Set$.
\begin{equation}\label{eq:naturalityobjecteval}\textstyle
\begin{tikzcd}
	 Xc \ar[d] \ar[r, "{\C/_{-}}"] & \Cat\big( {\C/_c}\,,\, {\C/_X}\, \big) \ar[d]\\  
	1c\ar[r, swap, "{\C/_{-}}"] &   \Cat\big( {\C/_c}\,,\, \C/_1 \big)
 \end{tikzcd}
 \end{equation}
The image of $*\in 1c$ along the bottom is the forgetful functor $U_c : \C/_c\to \C$, and its fiber under the map on the right is therefore the set of functors $F : {\C/_c}\to {\C/_X}$ such that $U_X\circ F = U_c$, where $U_X : \C/_X\to \C$ is also a forgetful functor. But any such $F$ is easily seen to be uniquely of the form $\C/_{x}$ for $x = F(1_c) : \y{c} \to X$.
\end{proof}

\begin{remark} 
Since $\int_\C 1\, \cong\, \C$ the functor  $\int_\C : \widehat\C \to \Cat$ can be factored through the slice category $\Cat/_\C$.
\begin{equation}\label{eq:nerve2}\textstyle
\begin{tikzcd}
	 \widehat\C \ar[rr, "(\int_\C)/_1"] \ar[rrdd, swap, "\int_\C"] 
	 	&& \Cat/_\C \ar[dd, "{\C_!}"] \\  
	 \\
	&& \C 
 \end{tikzcd}
 \end{equation}
 The adjunction $\int_\C\dashv \nu_\C : \Cat \to \widehat\C$ factors as well, but it is the unfactored adjunction that is more useful for the present purpose.
 \end{remark}

\subsection*{3. Classifying families}

For the terminal presheaf $1\in\widehat{\C}$, we have an iso $\elem{1} \cong\C$. So for every $X\in\widehat{\C}$ there is a canonical projection  $\elem X \ra\C$, which is easily seen to be a discrete fibration.  It follows that for any map $Y\to X$ of presheaves, the associated map $\elem Y \to \elem X$ is also a discrete fibration. 
Ignoring size issues for the moment, recall that discrete fibrations in $\Cat$ are classified by the forgetful functor $\op{\dot{\Set}}\to \op{\Set}$ from (the opposites of) the category of pointed sets to that of sets (cf.~\cite{W:2007}).  For every presheaf $X\in\widehat{\C}$, we therefore have a pullback diagram in $\Cat$,
\begin{equation}\label{eq:classifyuniversecat}\textstyle
\begin{tikzcd}
	 \elem X \ar[d] \ar[r] \pbmark & \op{\dot{\Set}} \ar[d]\\  
	\C \ar[r,swap,"X"] &  \op{\Set}.
 \end{tikzcd}
 \end{equation}
 Using $\elem{1} \cong\C$ and transposing by the adjunction $\int \dashv \nu$ then gives a commutative square in $\widehat{\C}$,
\begin{equation}\label{eq:classifyuniversetype}\textstyle
\begin{tikzcd}
	 X \ar[d] \ar[r] & \nu\op{\dot{\Set}} \ar[d]\\  
	1 \ar[r,swap,"\tilde{X}"] &  \nu\op{\Set}.
 \end{tikzcd}
 \end{equation}

\begin{lemma}
The square \eqref{eq:classifyuniversetype} is a pullback in $\widehat{\C}$. More generally, for any map $Y\ra X$ in $\widehat{\C}$, there is a pullback square 
\begin{equation}\label{eq:classifyuniversefamily}\textstyle
\begin{tikzcd}
	 Y \ar[d] \pbmark \ar[r] & \nu\op{\dot{\Set}} \ar[d] \\  
	X \ar[r] &  \nu\op{\Set}\,.
 \end{tikzcd}
 \end{equation}
\end{lemma}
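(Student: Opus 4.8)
The plan is to establish the general pullback \eqref{eq:classifyuniversefamily} first and then recover \eqref{eq:classifyuniversetype} as the special case of the map $f = {!} : X \to 1$, using $\elem{1}\cong\C$. The strategy is to exhibit \eqref{eq:classifyuniversefamily} as the outer rectangle obtained by pasting the naturality square of Lemma~\ref{lemma:natpb} onto the $\nu$-image of a classifying pullback living in $\Cat$. Concretely, I would start from a given $f : Y \to X$ in $\widehat\C$ and form the associated functor $\int\!{f} : \elem{Y} \to \elem{X}$, which is a discrete fibration (as noted just before \eqref{eq:classifyuniversecat}). Exactly as in \eqref{eq:classifyuniversecat}, but now over the base $\elem{X}$ rather than $\C$, the classification of discrete fibrations by $\op{\dot{\Set}} \to \op{\Set}$ provides a pullback in $\Cat$ with left edge $\int\!{f}$ and lower edge a classifying functor $\elem{X} \to \op{\Set}$; by construction the $\int\dashv\nu$-transpose of this lower edge is the designated bottom map $X \to \nu\op{\Set}$ of \eqref{eq:classifyuniversefamily}.

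The key step is then purely formal. Since $\nu$ is a right adjoint it preserves pullbacks, so applying $\nu$ to the classifying square in $\Cat$ yields a pullback square in $\widehat\C$ with left edge $\nu{\int\!{f}}$. I would paste this on the right of the naturality square of Lemma~\ref{lemma:natpb} for $f$, which is itself a pullback, obtaining
\[
\begin{tikzcd}
	Y \ar[d,swap,"f"] \ar[r,"\eta_Y"] & \nu{\int\!{Y}} \ar[d,"\nu{\int\!{f}}"] \ar[r] & \nu\op{\dot{\Set}} \ar[d] \\
	X \ar[r,swap,"\eta_X"] & \nu{\int\!{X}} \ar[r] & \nu\op{\Set}.
\end{tikzcd}
\]
Both inner squares are pullbacks and they share the middle edge $\nu{\int\!{f}}$, so by the pasting lemma for pullbacks the outer rectangle is a pullback. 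It remains only to identify this rectangle with \eqref{eq:classifyuniversefamily}: by the description of $\eta$ in \eqref{eq:eta} and the triangle identities, the top composite $Y \xrightarrow{\eta_Y} \nu{\int\!{Y}} \to \nu\op{\dot{\Set}}$ is precisely the adjoint transpose of the classifier $\elem{Y} \to \op{\dot{\Set}}$, and similarly the bottom composite is the transpose $X \to \nu\op{\Set}$ fixed above. Hence the outer rectangle is \eqref{eq:classifyuniversefamily}, and it is a pullback.

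I expect the only real obstacle to be the bookkeeping of adjoint transposes — verifying that composing with $\eta_Y$ (resp.\ $\eta_X$) turns $\nu$ of the classifying functor back into the intended classifying map rather than some reindexed variant, and that the naturality square being pasted is the one for $f$ itself, with middle edge matching $\nu{\int\!{f}}$. This is where \eqref{eq:eta}, together with the fact extracted in the proof of Lemma~\ref{lemma:natpb} that such functors are uniquely of the form $\C/_x$, does the work. Finally, taking $Y = X$ and $f = {!} : X \to 1$, so that $\elem{X} \to \elem{1}\cong\C$ is the canonical discrete fibration and the bottom map becomes $\tilde X : 1 \to \nu\op{\Set}$, specializes the rectangle above to \eqref{eq:classifyuniversetype}, giving the first assertion.
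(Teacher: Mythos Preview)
Your proposal is correct and follows essentially the same approach as the paper: apply the right adjoint $\nu$ to the classifying pullback in $\Cat$ for the discrete fibration $\elem Y \to \elem X$, then paste the naturality square of Lemma~\ref{lemma:natpb} on the left to obtain \eqref{eq:classifyuniversefamily} as a composite of two pullbacks. The paper's proof is a one-line sketch of exactly this, leaving implicit the adjoint-transpose bookkeeping that you spell out.
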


\begin{proof}
Apply the right adjoint $\nu$ to the pullback square \eqref{eq:classifyuniversecat} and paste the naturality square \eqref{eq:naturality} from Lemma \ref{lemma:natpb} on the left, to obtain the transposed square \eqref{eq:classifyuniversefamily} as a pasting of two pullbacks.
\end{proof}

Let us write $\VV \to \V$ for the vertical map on the right in \eqref{eq:classifyuniversefamily};  that is, let
\begin{align}\label{eq:universedef}\textstyle
\VV\, &=\, \nu\op{\dot{\Set}}\\  
\V\, &=\, \nu\op{\Set}.\notag
 \end{align}
 
 We can then summarize our results so far as follows.

 \begin{proposition}\label{prop:Vclassifies}
The nerve $\VV\to\V$  of the classifier for discrete fibrations $\op\SSet\to\op\Set$, as defined in \eqref{eq:universedef}, classifies natural transformations $Y\to X$ in $\widehat{\C}$, in the sense that there is always a pullback square,
\begin{equation}\label{eq:classifyuniversefamily2}\textstyle
\begin{tikzcd}
	 Y \ar[d] \pbmark \ar[r] & \VV \ar[d] \\  
	X \ar[r,swap, "\tilde{Y} "] &  \V.
 \end{tikzcd}
 \end{equation}
The classifying map $\tilde{Y} : X\to \V$ is determined by the adjunction $\int \dashv \nu$ as the transpose of the classifying map of the discrete fibration $\elem Y\to\elem X$.  
\end{proposition}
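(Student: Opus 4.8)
\emph{Proof proposal.} The plan is to treat the two assertions separately. The existence of the pullback square \eqref{eq:classifyuniversefamily2} is nothing but \eqref{eq:classifyuniversefamily} from the preceding Lemma, rewritten with the abbreviations \eqref{eq:universedef}; so the real work lies in the second assertion, the identification of the classifying map $\tilde Y$. My strategy is to rerun the pasting argument used for that Lemma, but starting from the classifier of the discrete fibration $\elem Y \to \elem X$ itself rather than from the special square \eqref{eq:classifyuniversecat} for the projection $\elem X \to \C$, and then to read off the bottom edge of the pasted rectangle.

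First I would recall the observation preceding \eqref{eq:classifyuniversecat} that $\elem Y \to \elem X$ is a discrete fibration. Because $\op\SSet \to \op\Set$ classifies discrete fibrations in $\Cat$, there is then a (unique) functor $k : \elem X \to \op\Set$ fitting into a pullback square in $\Cat$ with top edge $\elem Y \to \op\SSet$, bottom edge $k$, and verticals $\elem Y \to \elem X$ and $\op\SSet \to \op\Set$. Applying the right adjoint $\nu$, which preserves pullbacks, yields a pullback square in $\widehat\C$ with top edge $\nu\elem Y \to \VV$, bottom edge $\nu k$, and verticals $\nu\elem Y \to \nu\elem X$ and $\VV \to \V$. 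I would then paste the naturality square \eqref{eq:naturality} for the map $Y \to X$ on the left; this is a pullback by Lemma \ref{lemma:natpb}. The resulting outer rectangle is a pullback with left vertical $Y \to X$, right vertical $\VV \to \V$, and top edge $Y \to \VV$, which is exactly \eqref{eq:classifyuniversefamily2}. Its bottom edge is the composite $\nu k \circ \eta_X : X \to \V$, and this is by definition the transpose of $k$ under $\int \dashv \nu$; since $k$ is the classifier of $\elem Y \to \elem X$, this is precisely the description of $\tilde Y$ asserted.

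I expect the substantive content to be light and the only delicate points to be matters of bookkeeping: checking that Lemma \ref{lemma:natpb} indeed applies to an arbitrary $Y \to X$ (it is stated for any $f$, so the general naturality square is available), and tracking the variances through the opposite categories so that the classifying functor really takes values in $\op\Set$ and so that $\nu k \circ \eta_X$ agrees with the adjoint transpose. Finally, the uniqueness implicit in calling $\tilde Y$ ``the'' classifying map would follow from the uniqueness of $k$, which holds because $\op\SSet \to \op\Set$ classifies discrete fibrations strictly.
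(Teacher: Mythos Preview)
Your proposal is correct and follows essentially the same route as the paper: the paper's Proposition is offered merely as a summary of the preceding Lemma, whose proof is precisely ``apply $\nu$ to the classifying pullback in $\Cat$ and paste the naturality square from Lemma~\ref{lemma:natpb} on the left,'' which is exactly what you do, with the added care of tracking the bottom edge to verify the description of $\tilde Y$ as the adjoint transpose. One small caveat: your closing remark that $\op\SSet\to\op\Set$ classifies discrete fibrations \emph{strictly}, so that $k$ is unique, is slightly stronger than what the paper commits to---immediately after the Proposition the paper notes that classifying maps into $\V$ are \emph{not} in general unique, and that classifying functors into $\op\Set$ are unique only up to natural isomorphism; the definite article in ``the classifying map $\tilde Y$'' is justified not by uniqueness but by the fact that the transpose construction singles out a canonical choice.
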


The classifying map $\tilde{Y} : X\to \V$ of a given a natural transformation $Y\to X$ is, of course, not in general unique. Nonetheless, we can make use of the construction of $\VV\to\V$ as the nerve of the discrete fibration classifier $\op\SSet\to\op\Set$, for which classifying functors $\C \to \op\Set$ are unique up to natural isomorphism, to infer the following proposition, which plays a role in \cite{Shu:15,GSS:22} and elsewhere.

\begin{proposition}[Realignment]\label{prop:realignment}
Given a monomorphism $c : C\cof X$ and a family $Y\to X$, let $y_c : C \to \V$ classify the pullback $c^*Y\to C$.  Then there is a classifying map $y: X \to \V$ for $Y\to X$ with $y\circ c = y_c$.
\begin{equation}\label{diagram:realignment}
\begin{tikzcd}
c^*Y \ar[dd] \ar[rd] \ar[rr] && \VV \ar[dd] \\
& Y \ar[dd] \ar[ru, dotted] & \\
C  \ar[rd, tail,swap, "c"] \ar[rr, near start, "y_c"] && \V  \\
& X \ar[ru, dotted, swap, "y"] &
\end{tikzcd}
\end{equation}
\end{proposition}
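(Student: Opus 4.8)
The plan is to cross the adjunction $\int\dashv\nu$, carry out the realignment in $\Cat$ — where, crucially, classifying functors for discrete fibrations are unique only up to natural isomorphism, which is exactly the room we need to maneuver — and then transpose back. Transposing, a map $X\to\V=\nu\op{\Set}$ is the same as a functor $\int X\to\op{\Set}$, and by naturality of the transposition in its contravariant argument, precomposition with $c$ corresponds to precomposition with $\int c$. Thus it suffices to build a functor $\hat{y}:\int X\to\op{\Set}$ that classifies the discrete fibration $\int Y\to\int X$ and satisfies $\hat{y}\circ\int c=\hat{y}_{c}$ on the nose, where $\hat{y}_{c}:\int C\to\op{\Set}$ is the transpose of $y_c$; for then the transpose $y:X\to\V$ of $\hat{y}$ classifies $Y\to X$ by Proposition \ref{prop:Vclassifies}, and $y\circ c=y_c$ follows since the transposition is a bijection.

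Two preliminary facts set this up. First, since $c$ is monic, each component $c_I$ is injective, so $\int c:\int C\to\int X$ is injective on objects and, by a short naturality argument, full; we may thus regard $\int C$ as a full subcategory of $\int X$. Second, I claim a map to $\V$ classifies a family exactly when its transpose classifies the associated discrete fibration: the functor $\int$ preserves pullbacks (a direct check on categories of elements), and the naturality square of the counit $\epsilon$ at $\op{\SSet}\to\op{\Set}$ is cartesian because the universal discrete fibration $\op{\SSet}\to\op{\Set}$ is itself a discrete fibration. Pasting these, applying $\int$ to the pullback \eqref{eq:classifyuniversefamily2} for $y_c$ and composing with $\epsilon$ yields a pullback exhibiting $\hat{y}_{c}$ as a classifying functor of $\int(c^*Y)\to\int C$. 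Moreover, since $\int$ preserves the pullback defining $c^*Y$, this fibration $\int(c^*Y)\to\int C$ is precisely the restriction $(\int c)^*(\int Y)$.

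Now choose any classifying functor $F:\int X\to\op{\Set}$ for $\int Y\to\int X$, which exists since $\op{\SSet}\to\op{\Set}$ classifies discrete fibrations. By pullback-stability, $F\circ\int c$ classifies $(\int c)^*(\int Y)=\int(c^*Y)$, and so does $\hat{y}_{c}$ by the previous paragraph; uniqueness up to natural isomorphism then furnishes a natural isomorphism $\theta:\hat{y}_{c}\cong F\circ\int c$ over $\int C$. The decisive step — the one carrying the real content — is to strictify $\theta$ into an on-the-nose agreement. Because $\int c$ is injective on objects, I define $\hat{y}$ on objects by $\hat{y}(a)=\hat{y}_{c}(a)$ for $a\in\int C$ and $\hat{y}(a)=F(a)$ otherwise, set $\Theta_a=\theta_a$ on $\int C$ and $\Theta_a=\mathrm{id}$ elsewhere, and define $\hat{y}$ on a morphism $g$ by $\hat{y}(g)=\Theta_{\mathrm{cod}\,g}^{-1}\circ F(g)\circ\Theta_{\mathrm{dom}\,g}$. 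A routine verification shows $\hat{y}$ is a functor, that $\Theta:\hat{y}\cong F$ is natural, and — using naturality of $\theta$ on morphisms of $\int C$ — that $\hat{y}\circ\int c=\hat{y}_{c}$ exactly.

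Finally, since $\hat{y}\cong F$ and $F$ classifies $\int Y\to\int X$, and naturally isomorphic functors pull the universal discrete fibration back to isomorphic discrete fibrations, $\hat{y}$ classifies $\int Y\to\int X$ as well. Transposing $\hat{y}$ across $\int\dashv\nu$ produces the desired $y:X\to\V$, which classifies $Y\to X$ and satisfies $y\circ c=y_c$. I expect the transport-of-structure step to be the only genuine obstacle; the remaining ingredients are either cited (uniqueness up to isomorphism of classifying functors, Proposition \ref{prop:Vclassifies}) or routine verifications.
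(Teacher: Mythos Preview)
Your proof is correct and follows the same overall strategy as the paper: transpose the realignment problem across $\int\dashv\nu$, use uniqueness-up-to-natural-isomorphism of classifying functors for discrete fibrations to obtain a natural isomorphism $\theta$ over $\int C$, and then strictify $\theta$ into an on-the-nose extension over $\int X$. The difference is only in how the strictification is packaged. You carry it out by hand via transport of structure, extending $\theta$ to a family $\Theta$ of isomorphisms (identities off $\int C$) and conjugating $F$ by $\Theta$; the paper instead phrases the same step as a lifting problem in the canonical model structure on $\Cat$, using that injective-on-objects functors are left orthogonal to isofibrations and that the projection $p_1:\op{(\Set^{\cong})}\to\op{\Set}$ from the category of isomorphisms is an isofibration. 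Your explicit construction is precisely how one proves that lifting property, so the two arguments are the same under the surface; yours is more self-contained, while the paper's points to a reusable weak factorization system. One minor remark: your observation that $\int c$ is full is correct but unnecessary---only injectivity on objects is actually used in the transport-of-structure step, just as in the paper's lifting argument.
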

\begin{proof}
Transposing the realignment problem \eqref{diagram:realignment} for presheaves across the adjunction $\int\dashv \nu$ results in the following realignment problem for discrete fibrations.
\begin{equation}\label{diagram:realignment2}
\begin{tikzcd}
\elem  c^*Y \ar[dd] \ar[rd] \ar[rr] && \op{\dot{\Set}}  \ar[dd] \\
&\elem  Y \ar[dd] \ar[ru, dotted] & \\
\elem  C  \ar[rd, tail,swap, "{\elem  c}"] \ar[rr, near start, "\widetilde{y_c}"] && \op{{\Set}}   \\
& \elem  X \ar[ru, dotted, swap, "\tilde{y}"] &
\end{tikzcd}
\end{equation}
The category of elements functor $\int $ is easily seen to preserve pullbacks, hence monos; thus let us consider the general case of a functor  $C : \C \mono \D$ which is monic in $\Cat$, a pullback of discrete fibrations as on the left below, and a presheaf $E : \C \to  \op{{\Set}}$ with $\elem E \cong \mathbb{E}$ over $\C$. 
\begin{equation}\label{diagram:realignment3}
\begin{tikzcd}
\mathbb{E} \ar[dd] \ar[rd] \ar[rr] && \op{\dot{\Set}}  \ar[dd] \\
& \mathbb{F}  \ar[dd] \ar[ru, dotted] & \\
\C  \ar[rd, tail,swap, "{C}"] \ar[rr, near start, "E"] && \op{{\Set}}   \\
& \D \ar[ru, dotted, swap, "F"] &
\end{tikzcd}
\end{equation}
We seek $F : \D \to  \op{{\Set}}$ with $\elem F \cong \mathbb{F}$ over $\D$ and $F\circ C = E$.  Let $F_0 : \D \to  \op{\Set}$ with $\elem F_0 \cong \mathbb{F}$ over $\D$.  Since $F_0\circ C$ and $E$ both classify $\mathbb{E}$, there is a natural iso $e : F_0\circ C \cong E$.
Consider the following diagram
\begin{equation}\label{diagram:realignment4}
\begin{tikzcd}
\C \ar[dd, tail, swap, "{C}"]  \ar[rr, "e"] && {\op{(\Set^{\cong})}}  \ar[dd,"{p_1}"] \ar[r,swap,"{p_2}"] &  {\op{\Set}} \\
&&&\\
\D  \ar[rr,swap, "F_0"]  \ar[rruu, dotted, swap, "f"] && \op{\Set} & 
\end{tikzcd}
\end{equation}
where $\Set^{\cong}$ is the category of isos in $\Set$, with $p_1, p_2$ the (opposites of the) domain and codomain projections.  There is a well-known weak factorization system on $\Cat$ (part of the ``canonical model structure'') with injective-on-objects functors on the left and isofibrations on the right.  Thus there is a diagonal filler $f$ as indicated.  The functor $F := p_2 f : \D \to \op{\Set}$ is then  the one we seek.
\end{proof}

Of course, as defined in \eqref{eq:universedef}, the classifier $\VV\to\V$ cannot be a map in $\widehat{\C}$, for reasons of size; we now address this.

\subsection*{4. Small maps}

Let $\alpha$ be a cardinal number, and call the sets that are strictly smaller than it $\alpha$-\emph{small}.  Let $\Set_\alpha\hook\Set$ be the full subcategory of $\alpha$-small sets.  
Call a presheaf $X : \op{\C} \to \Set$ $\alpha$-small if all of its values are $\alpha$-small sets, and thus if, and only if, it factors through $\Set_\alpha\hook\Set$. Call a map $f:Y\to X$ of presheaves $\alpha$-small if all of the fibers $f_c^{-1}\{ x\} \subseteq Yc$ are $\alpha$-small sets (for all $c\in\C$ and $x\in Xc$). The latter condition is of course equivalent to saying that, in the pullback square over the element $x:\y{c} \to X$, 
\begin{equation}\label{eq:smallmap}\textstyle
\begin{tikzcd}
	 Y_x \ar[d] \pbmark \ar[r] & Y \ar[d, "f"] \\  
	\y{c} \ar[r,swap,"x"] &  X,
 \end{tikzcd}
 \end{equation}
the presheaf $Y_x$ is $\alpha$-small.


Now let us restrict the specification \eqref{eq:universedef} of $\VV\to\V$ to the $\alpha$-small sets:
\begin{align}\label{eq:universedefalpha}\textstyle
\VV_\alpha\, &=\, \nu \dot{\Set^{\mathsf{op}}_\alpha}\\  
\V_\alpha\, &=\, \nu \Set^{\mathsf{op}}_\alpha. \notag
 \end{align}
Then the evident forgetful map $\VV_\alpha\to\V_\alpha$ \emph{is} a map in the category $\widehat{\C}$ of presheaves, and it is in fact $\alpha$-small. Moreover, it has the following basic property, which is just a restriction of the basic property of $\VV\to\V$ stated in Proposition \ref{prop:Vclassifies}.

 \begin{proposition}\label{prop:Valphaclassifies}
The map $\VV_\alpha\to\V_\alpha$ classifies $\alpha$-small maps $f:Y\to X$ in $\widehat{\C}$, in the sense that there is always a pullback square,
\begin{equation}\label{eq:classifyuniversefamilyalpha}\textstyle
\begin{tikzcd}
	 Y \ar[d] \pbmark \ar[r] & \VV_\alpha \ar[d] \\  
	X \ar[r,swap, "\tilde{Y}"] &  \V_\alpha.
 \end{tikzcd}
 \end{equation}
The classifying map $\tilde{Y} : X\to \V_\alpha$ is determined by the adjunction $\int \dashv \nu$ as (the factorization of) the transpose of the classifiyng map of the discrete fibration $\elem X\to\elem Y$. 
\end{proposition}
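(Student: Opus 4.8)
The plan is to obtain this proposition as a straightforward restriction of Proposition \ref{prop:Vclassifies} to the $\alpha$-small setting, the only real work being to check that $\alpha$-smallness of $f : Y \to X$ corresponds exactly to the classifying functor of the associated discrete fibration landing in $\op{\Set_\alpha}$.

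First I would identify the fibers of the discrete fibration $\elem{Y}\to\elem{X}$ induced by $f$. Over an object $(c,x)\in\elem{X}$ (with $c\in\C$ and $x\in Xc$), the fiber consists of the pairs $(c,y)$ with $y\in Yc$ and $f_c(y)=x$; that is, it is precisely the set $f_c^{-1}\{x\}$ appearing in \eqref{eq:smallmap}. Since the classifying functor $\elem{X}\to\op{\Set}$ of a discrete fibration sends each object to its fiber, $f$ being $\alpha$-small says exactly that this functor takes all of its values among the $\alpha$-small sets, and hence factors through the full, injective-on-objects inclusion $\op{\Set_\alpha}\hook\op{\Set}$.

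Next I would observe that the restricted classifier $\op{\SSet_\alpha}\to\op{\Set_\alpha}$ of \eqref{eq:universedefalpha} is itself the pullback of the discrete fibration classifier $\op{\SSet}\to\op{\Set}$ along $\op{\Set_\alpha}\hook\op{\Set}$: the fiber of $\op{\SSet}\to\op{\Set}$ over a set $S$ is the set of points of $S$, and restricting the base to the $\alpha$-small $S$ restricts the total category to the pointed $\alpha$-small sets, which are exactly the pointed sets with $\alpha$-small underlying set. Consequently the pullback square \eqref{eq:classifyuniversecat} for $\elem{Y}\to\elem{X}$ refines to a pullback in $\Cat$ with $\op{\SSet_\alpha}\to\op{\Set_\alpha}$ on the right and the factored classifying functor $\elem{X}\to\op{\Set_\alpha}$ along the bottom.

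Finally I would transpose across $\int\dashv\nu$ exactly as in the proof of Proposition \ref{prop:Vclassifies}: apply the right adjoint $\nu$ to this $\alpha$-small pullback and paste the naturality square of Lemma \ref{lemma:natpb} on the left. Since $\nu$ preserves the pullback and Lemma \ref{lemma:natpb} supplies the second pullback, the pasting yields \eqref{eq:classifyuniversefamilyalpha}, with classifying map $\tilde{Y}:X\to\V_\alpha=\nu\op{\Set_\alpha}$. As $\nu$ carries the inclusion $\op{\Set_\alpha}\hook\op{\Set}$ to a monomorphism $\V_\alpha\cof\V$, this $\tilde{Y}$ is moreover the (unique) factorization through $\V_\alpha$ of the classifying map $X\to\V$ produced by Proposition \ref{prop:Vclassifies}, as asserted. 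The only point needing care is the fiber identification in the first step; once the fibers of $\elem{Y}\to\elem{X}$ are recognized as the sets $f_c^{-1}\{x\}$, the remainder is a mechanical restriction of the earlier arguments, and I expect no genuine obstacle beyond bookkeeping the inclusions.
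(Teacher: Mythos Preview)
Your proposal is correct and follows essentially the same route as the paper: both arguments observe that $\alpha$-smallness of $f$ amounts to the classifying functor of $\elem Y\to\elem X$ factoring through $\op{\Set_\alpha}\hook\op{\Set}$, note that $\op{\SSet_\alpha}\to\op{\Set_\alpha}$ is the pullback of the full classifier along this inclusion, and then transpose across $\int\dashv\nu$ as in Proposition~\ref{prop:Vclassifies}. You are slightly more explicit than the paper in identifying the fibers of $\elem Y\to\elem X$ with the sets $f_c^{-1}\{x\}$, but this is a presentational difference only.
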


\begin{proof} If $Y\to X$ is $\alpha$-small, its classifying map $\tilde{Y} : X\to\V$ factors through $\V_\alpha \hook \V$, as indicated below, 
\begin{equation}\label{eq:classifyuniversetype2}\textstyle
\begin{tikzcd}
	 Y \ar[d] \ar[rr, bend left] \ar[r] & \nu\op{\dot{\Set_\alpha}} \ar[d] \ar[r,hook] & \nu\op{\dot{\Set}} \ar[d]\\  
	X \ar[rr, bend right, swap,"\tilde{Y}"] \ar[r] &  \nu\op{\Set_\alpha} \ar[r,hook] &  \nu\op{\Set},
 \end{tikzcd}
 \end{equation}
in virtue of the following adjoint transposition,
\begin{equation}\label{eq:adjointtranspose}\textstyle
\begin{tikzcd}
	 \elem Y \ar[d] \ar[rr, bend left] \ar[r] & \op{\dot{\Set_\alpha}} \ar[d] \ar[r,hook] & \op{\dot{\Set}} \ar[d]\\  
	 \elem X \ar[rr, bend right, ] \ar[r]  &  \op{\Set_\alpha} \ar[r,hook]  &  \op{\Set}.
 \end{tikzcd}
  \end{equation}
Note that the square on the right is evidently a pullback, and the one on the left therefore is, too, because the outer rectangle is the classifying pulback of the discrete fibration $\elem Y \to \elem X$, as stated.  Thus the left square in \eqref{eq:classifyuniversetype2} is a pullback.
\end{proof}

\subsection*{5. Examples}

\begin{enumerate}
\item Let $\alpha = \kappa$ a strongly inaccessible cadinal, so that $\mathsf{ob}({\Set_\kappa})$ is a Grothendieck universe.  Then the Hofmann-Streicher universe of \eqref{eq:universe} is recovered in the present setting as the $\kappa$-small map classifier
\begin{equation*}
E\, \cong\, \VV_\kappa \too \V_\kappa\, \cong\, U
\end{equation*}
 in the sense of Proposition \ref{prop:Valphaclassifies}.  Indeed, for $c\in\C$, we have 
 \begin{align}
  \V_{\kappa}{c}\ &=\ \nu(\Set^{\mathsf{op}}_\kappa)(c) = \Cat\big( {\C/_c}\,,\, \Set^{\mathsf{op}}_\kappa \big)\  =\ \mathsf{ob}(\widehat{\C/_c})\ =\ U{c} \,.
   \end{align} 
For $\VV_{\kappa}$ we then have,
   \begin{align}\label{eq:veedotc}
   \VV_{\kappa}{c}\ =\ \nu(\SSet^{\mathsf{op}}_\kappa)(c)\ &=\ \Cat\big( {\C/_c}\,,\, \SSet^{\mathsf{op}}_\kappa \big) \notag \\ 
   &\cong\ {\textstyle \coprod_{A\in\V_{\kappa}{c}}\Cat_{{\C/_c}}\big( {\C/_c}\,,\, A^*\Set^{\mathsf{op}}_\kappa \big)}
   \end{align}
   where the $A$-summand in \eqref{eq:veedotc} is defined by taking sections of the  pullback indicated below.
   \begin{equation}\label{eq:pbforindexing}\textstyle
\begin{tikzcd}
	A^*\Set^{\mathsf{op}}_\kappa \ar[d] \ar[r] \pbmark & \SSet^{\mathsf{op}}_\kappa \ar[d]\\  
	\C/_c \ar[r,swap,"A"] \ar[u, bend left, dotted] \ar[ur, dotted] &  \Set^{\mathsf{op}}_\kappa
 \end{tikzcd}
 \end{equation}
 But $A^*\Set^{\mathsf{op}}_\kappa\ \cong\ {\textstyle \int_{\C/_c}\!A}$ over $\C/_c\,$, and sections of this discrete fibration in $\Cat$ correspond uniquely to natural maps $1\to A$ in $\widehat{{\C/_c}}$.  Since $1$  is representable in $\widehat{{\C/_c}}$ we can continue \eqref{eq:veedotc} by
  \begin{align*}
   \VV_{\kappa}{c}\ &\cong\ {\textstyle \coprod_{A\in \V_{\kappa}{c}}\Cat_{{\C/_c}}\big( {\C/_c}\,,\, A^*\Set^{\mathsf{op}}_\kappa \big)}\\
   	&\cong\ {\textstyle \coprod_{A\in \V_{\kappa}{c}} \widehat{{\C/_c}}(1, A)}\\
	&\cong\ {\textstyle \coprod_{A\in \V_{\kappa}{c}} A(1_c) } \\
	& =\ {\textstyle \coprod_{A\in \V_{\kappa}{c}} {\mathsf{E}l}(\langle c, A\rangle)}\\
	& =\  E c\,.
   \end{align*}
 
\item By functoriality of the nerve $\nu : \Cat \to \widehat{\C}$, a sequence of Grothendieck universes $$\U \subseteq {\U\,}' \subseteq ...$$ in $\Set$ gives rise to a (cumulative) sequence of type-theoretic universes $$\V \mono {\V\,}' \mono ...$$ in $\widehat{\C}$. More precisely, there is a sequence of  cartesian squares,
\begin{equation}\label{eq:Vhierarchy}\textstyle
\begin{tikzcd}
	 \VV \ar[d] \ar[r,tail] \pbmark & {\VV\,'} \ar[d] \ar[r,tail] \pbmark & \dots \\  
	 \V  \ar[r, tail]  &  {\V\,}' \ar[r,tail]  & \dots\,,
 \end{tikzcd}
  \end{equation}
in the image of $\nu : \Cat\too\widehat\C$, classifying small maps in $\widehat\C$ of increasing size, in the sense of Proposition \ref{prop:Valphaclassifies}.

\item Let $\alpha = 2$ so that $1\to 2$ is the subobject classifier of $\Set$, and 
$$\mathbbm{1} = \SSet^{\mathsf{op}}_2 \too  \Set^{\mathsf{op}}_2 = \mathbbm{2}$$ is then a classifier in $\Cat$ for \emph{sieves}, i.e.\ full subcategories $\mathbb{S}\hook\A$ closed under the domains of arrows $a\to s$ for $s\in\mathbb{S}$.  The nerve $\VV_{2}  \to \V_{2}$ is then exactly the subobject classifier $1\to\Omega$ of $\widehat\C$,
\[
1 = \nu \mathbbm{1} = \VV_{2} \too  \V_{2} = \nu \mathbbm{2} = \Omega  \,.
\]

\item Let $i : \mathbbm{2} \hook \Set_\kappa$ and $ p : \Set_\kappa \to \mathbbm{2} $ be the embedding-retraction pair with $i : \mathbbm{2} \hook \Set_\kappa$ the inclusion of the full subcategory on the sets $ \{0, 1\}$ and $p : \Set_\kappa \to \mathbbm{2}$ the retraction that takes $0 = \emptyset$ to itself, and everything else (i.e.\ the non-empty sets) to $1 = \{\emptyset\}$. There is a retraction (of arrows) in $\Cat$,
\begin{equation}\label{eq:Setretraction}\textstyle
\begin{tikzcd}
	 \mathbbm{1} \ar[d]  \ar[r,hook] \pbmark & \SSet_\kappa \ar[d] \ar[r] &\mathbbm{1} \ar[d] \\  
	 \mathbbm{2}   \ar[r,hook,swap, "i"]  &  \Set_\kappa \ar[r, swap, two heads, "p"]  & \mathbbm{2} 
 \end{tikzcd}
  \end{equation}
  where the left square is a pullback.  
    
By the functoriality of (\,$\op{-}$ and) $\nu : \Cat \to \widehat{\C}$ we then have a retract diagram in $\widehat\C$, again with a pullback on the left, 
\begin{equation}\label{eq:Vretraction}\textstyle
\begin{tikzcd}
	 1 \ar[d]  \ar[r,hook] \pbmark & \VV_\kappa \ar[d] \ar[r] &1\ar[d] \\  
	 \Omega   \ar[r,hook,swap, "\{-\}"]  &  \V_\kappa  \ar[r, swap, two heads, "{[-]}"]  &  \Omega 
 \end{tikzcd}
  \end{equation}
  where for any $\phi : X\to \Omega$ the subobject $\{\phi\}\mono X$ is classified as a small map by the composite $\{\phi\} : X\to \V_\kappa$\,, and for any small map $A\to X$, the image $[A] \mono X$ is classified as a subobject by the composite $[\alpha] : X\to \V_\kappa \to \Omega$\,, where $\alpha : X\to \V_\kappa$ classifies $A\to X$.  The idempotent composite $$|\!|\!-\!|\!| = \{[-]\} : \V_\kappa  \too \V_\kappa $$ is the \emph{propositional truncation modality} in the natural model of type theory given by $\VV_\kappa\to \V_\kappa$ (see \cite{AGH}).
\end{enumerate}

\subsection*{6. Change of base}

Let  $F : \C \to \D$ in $\Cat$ and consider the base change:

\begin{equation}\label{diagram:basechange1}
\begin{tikzcd}
\C \ar[dd, swap, "{F}"]  \ar[rr, hook,"\yon_\C"] && \Set^{\op{\C}}  \ar[dd,swap,shift right=3,"{F_!}"]  \ar[dd,shift left=3,"{F_*}"]\\
&&\\
\D  \ar[rr, hook,swap, "\yon_\D"] && \ar[uu,"{F^*}" description] \Set^{\op{\D}}  
\end{tikzcd}
\end{equation}
How is the universal (small) map $\VV_\C\to \V_\C$ in $\widehat{\C}$ related to $\VV_\D\to \V_\D$ in~$\widehat{\D}$\,?

For each $c\in\C$ we have the sliced functor $F/_{c} : \C/_c \to \D/_{Fc}$ which is the component at $c\in\C$ of a natural transformation $F/ : \C/ \to \D/\circ F$ as functors $\C \to \Cat$.
\begin{equation}\label{diagram:basechange2}
\begin{tikzcd}
\C \ar[dd,swap,"F"] \ar[dd,"{\qquad{\Downarrow\ F/}}"] \ar[rrrd,"{\C/}"] && \\
&&& \Cat \\
\D  \ar[rrru,swap, "{\D/}"] &&&
\end{tikzcd}
\end{equation}
Indeed, for each $h : c \to c'$ there is a commutative square:
\begin{equation}\label{diagram:basechange3}
\begin{tikzcd}
\C/_c \ar[dd, swap, "{F/_{c}}"]  \ar[rr, "{h\circ}"] && \C/_{c'} \ar[dd, "{F/_{c'}}"]  \\
&&\\
\D/_{Fc}  \ar[rr,swap, "{(Fh)\circ}"] && \D/_{Fc'} 
\end{tikzcd}
\end{equation}

The 2-cell $F/ : \C/ \to \D/\circ F$ in \eqref{diagram:basechange2} (left Kan) extends to one,
$$\textstyle \int_F : \int_\C \Longrightarrow \int_\D \circ\,F_!$$
\begin{equation}\label{diagram:basechange4}
\begin{tikzcd}
\widehat\C \ar[dd,swap,"F_!"] \ar[dd,"{\qquad{\Downarrow\ \int_F}}"] \ar[rrrd,"{\int_\C}"] &&& \\
&&& \Cat\,, \\
\widehat\D \ar[rrru,swap, "{\int_\D}"] &&&
\end{tikzcd}
\end{equation}
which has the following 2-categorical mate.
$$\textstyle \nu_F : F^*\circ \nu_\D \Longrightarrow \nu_\C$$
\begin{equation}\label{diagram:basechange5}
\begin{tikzcd}
\widehat\C  &&& \\
&&& \ar[lllu,swap,"{\nu_\C}"] \ar[llld,"{\nu_\D}"] \Cat \\
\widehat\D  \ar[uu,"F^*"] \ar[uu,swap,"{\qquad{\Uparrow\ \nu_F}}"] &&&
\end{tikzcd}
\end{equation}
Evaluating at the (small) discrete fibration classifier $\op\sset\to \op\set$, we obtain a commutative square in $\widehat\C$ of the form
\begin{equation}\label{diagram:basechange6}
\begin{tikzcd}
F^*\VV_D \ar[d]  \ar[r, "{\dot{\gamma}}"] & \VV_\C \ar[d]  \\
F^*\V_\D \ar[r,swap, "{\gamma}"] & \V_\C
\end{tikzcd}
\end{equation}
where we have written 
\begin{align*}
\dot{\gamma}\, &=\, (\nu_F)_{\op{\dot{\set}}}\\
\gamma\, &=\, (\nu_F)_{\op{\set}}
\end{align*}
for the components of $\nu_F$.

\begin{proposition}[Base change for universes]\label{prop:universebasechange}
For any functor $F: \C\to \D$, the comparison square \eqref{diagram:basechange6} for universes is a pullback in $\widehat\C$.
\end{proposition}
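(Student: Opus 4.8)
The plan is to verify that \eqref{diagram:basechange6} is a pullback \emph{pointwise}, since limits in $\widehat\C$ are computed objectwise. Fix $c\in\C$. Unwinding the definitions of $\nu_\C$, $\nu_\D$ and $F^*$, the value of \eqref{diagram:basechange6} at $c$ is the square of sets
\[
\begin{tikzcd}
\Cat(\D/_{Fc}, \op{\sset}) \ar[d] \ar[r] & \Cat(\C/_c, \op{\sset}) \ar[d] \\
\Cat(\D/_{Fc}, \op{\set}) \ar[r] & \Cat(\C/_c, \op{\set}),
\end{tikzcd}
\]
in which the horizontal maps are precomposition with the sliced functor $F/_c : \C/_c \to \D/_{Fc}$ of \eqref{diagram:basechange3} (this being the $c$-component of the mate $\nu_F$, obtained by applying $\Cat(-,\op{\set})$ and $\Cat(-,\op{\sset})$ to the $2$-cell $F/$ of \eqref{diagram:basechange2}), and the vertical maps are postcomposition with the classifier projection $\op{\sset}\to\op{\set}$.

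First I would record the identification already used in Example~1: for a small category $K$ with terminal object $t$, a functor $K\to\op{\sset}$ is the same thing as a functor $A: K\to\op{\set}$ --- equivalently a presheaf $A\in\widehat K$ --- together with a section of the classifying discrete fibration, i.e.\ a global element $1\to A$, i.e.\ (since $1$ is representable in $\widehat K$ by the terminal object $t$) an element $a\in A(t)$. Thus
\[
\Cat(K,\op{\sset})\ \cong\ \{\,(A,a)\ :\ A\in\widehat K,\ a\in A(t)\,\},
\]
with projection to $\Cat(K,\op{\set})$ given by $(A,a)\mapsto A$. Applying this with $t=1_c$ for $K=\C/_c$ and with $t=1_{Fc}$ for $K=\D/_{Fc}$ rewrites the whole square in terms of such pairs, the bottom map becoming restriction of presheaves, $B\mapsto B\circ F/_c$.

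The decisive point is that $F/_c$ preserves terminal objects: $F/_c(1_c)=\big(F(1_c): Fc\to Fc\big)=1_{Fc}$. Hence for a pair $(B,b)$ with $B\in\widehat{\D/_{Fc}}$ and $b\in B(1_{Fc})$, its image along the top map is $(B\circ F/_c,\,b)$, the transported basepoint again lying in $(B\circ F/_c)(1_c)=B(F/_c\,1_c)=B(1_{Fc})$ and being literally $b$. Consequently an element of the pullback of the lower-left and upper-right sets over the lower-right set is precisely a $B\in\widehat{\D/_{Fc}}$ together with an element of $(B\circ F/_c)(1_c)=B(1_{Fc})$; this is exactly the datum of the upper-left set $\{(B,b): b\in B(1_{Fc})\}$, and the comparison map is the identity under this identification. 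Therefore the square is a pullback at each $c\in\C$, and the proposition follows.

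The main obstacle is the bookkeeping rather than any genuine difficulty: one must correctly identify the $c$-components of the mate $\nu_F$ as precomposition by $F/_c$, and then check that the basepoint (section) data is transported \emph{on the nose}, which rests entirely on $F/_c$ carrying $1_c$ to $1_{Fc}$. Everything else is the pointwise computation of limits of presheaves, together with the representability of the terminal presheaf in $\widehat{\C/_c}$ and $\widehat{\D/_{Fc}}$.
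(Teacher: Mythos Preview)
Your proof is correct. Both you and the paper evaluate the square pointwise at $c\in\C$ and arrive at the same square of hom-sets, and both arguments ultimately rest on the fact that $F/_c : \C/_c \to \D/_{Fc}$ preserves terminal objects. The difference lies in how that fact is exploited.

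The paper takes a more conceptual route: it observes that the pointwise pullback condition is equivalent to the existence of diagonal fillers against the universal small discrete fibration $\op{\sset}\to\op{\set}$, hence to $F/_c$ being left orthogonal to all discrete fibrations, hence (by the comprehensive factorization system of Street--Walters) to $F/_c$ being \emph{final}. A separate lemma then shows that any functor preserving terminal objects is final. Your approach bypasses this machinery entirely: you identify $\Cat(K,\op{\sset})$ concretely as pairs $(A,a)$ with $a\in A(t)$ when $K$ has a terminal object $t$, and then check the pullback condition by hand using $F/_c(1_c)=1_{Fc}$. What you gain is a shorter, self-contained argument with no appeal to factorization systems on $\Cat$; what the paper's approach buys is an explanation of \emph{why} the result holds---namely that finality of $F/_c$ is exactly the orthogonality condition needed---and a statement (Lemma~\ref{lemma:locallyfinal}) that may be of independent use.
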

For the proof, we require the following.
\begin{lemma}\label{lemma:locallyfinal}
For any functor $F: \C\to \D$ and any $c\in\C$, the sliced functor $F/_c: \C/_c \too \D/_{Fc}$ is final.
\end{lemma}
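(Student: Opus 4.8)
The plan is to verify finality through the standard comma-category criterion: the sliced functor $F/_c$ is final if and only if, for every object $(d,h)$ of ${\D/_{Fc}}$ (so $h : d \to Fc$ in $\D$), the comma category $(d,h)\downarrow F/_c$ is nonempty and connected. Rather than check these two conditions separately, I would establish the slightly stronger and more structural fact that each such comma category has a \emph{terminal} object, which at once yields both nonemptiness and connectedness. This is the most economical route, since the slice ${\C/_c}$ carries an obvious terminal object and $F/_c$ is defined simply by applying $F$.

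The first step is to pin down the candidate terminal object. The slice ${\C/_c}$ has terminal object $(c, 1_c)$, and since $F/_c$ sends an object $f : x \to c$ to $Ff : Fx \to Fc$, it carries $(c,1_c)$ to $(Fc, F1_c) = (Fc, 1_{Fc})$, which is precisely the terminal object of ${\D/_{Fc}}$; this hinges only on the functoriality identity $F(1_c) = 1_{Fc}$. The candidate terminal object of $(d,h)\downarrow F/_c$ is then $\big((c,1_c),\, h\big)$, where $h$ is viewed as the structure morphism $(d,h) \to F/_c(c,1_c) = (Fc, 1_{Fc})$ in ${\D/_{Fc}}$, which is legitimate because $1_{Fc}\circ h = h$. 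The second step is to check its universal property directly. An arbitrary object of $(d,h)\downarrow F/_c$ consists of an object $(x,f)\in{\C/_c}$ together with a map $\beta : d \to Fx$ in $\D$ satisfying $Ff\circ\beta = h$; a morphism from it to the candidate is a morphism $g : (x,f)\to(c,1_c)$ in ${\C/_c}$ with $Fg\circ\beta = h$. But morphisms $(x,f)\to(c,1_c)$ in ${\C/_c}$ amount to maps $x\to c$ over $c$, of which there is exactly one, namely $f$ itself, and the required compatibility $Ff\circ\beta = h$ is just the defining condition on $\beta$. Hence the morphism to the candidate exists and is unique, so $\big((c,1_c),h\big)$ is terminal and $F/_c$ is final.

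I do not expect a genuine obstacle, as the argument is entirely formal; the only thing demanding care is the bookkeeping around the definition of $F/_c$ and the orientation of the comma-category morphisms, in particular confirming that $F/_c$ really does send the terminal object of ${\C/_c}$ to the terminal object of ${\D/_{Fc}}$. Abstractly, the whole computation is an instance of the general principle that any functor whose domain has a terminal object and which preserves that terminal object is final, and if brevity were preferred I would simply invoke that principle, observing that $F/_c$ preserves the terminal object of ${\C/_c}$.
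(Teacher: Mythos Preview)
Your proof is correct. The core observation---that $F/_c$ preserves the terminal object---is the same one the paper uses, but your execution differs from the paper's. You verify the comma-category criterion directly by exhibiting a terminal object in each $(d,h)\downarrow F/_c$, which is elementary and self-contained. The paper instead invokes the comprehensive factorization system $(\mathsf{Fin},\mathsf{dFib})$ on $\Cat$ from Street--Walters: since the inclusion of a terminal object into any category is final, and $F/_c$ composed with the terminal inclusion $\{1_c\}\hookrightarrow\C/_c$ equals the terminal inclusion $\{1_{Fc}\}\hookrightarrow\D/_{Fc}$, the right-cancellation property of the left class in an orthogonal factorization system yields that $F/_c$ is final. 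Your final remark about the ``general principle'' is essentially this argument stated without the factorization-system language. The paper's route is more in keeping with its later use of the comprehensive factorization in the proof of Proposition~\ref{prop:universebasechange}, while yours has the advantage of not requiring any external machinery.
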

\begin{proof}
Recall that a functor $F: \C\to \D$ is by definition \emph{final} if for all $G: \D\to\Set$, the canonical map $\varinjlim G\circ F \to \varinjlim G$ is an iso, and this holds just if, for all $d\in\D$, the comma category $d/_F$ is connected.  Moreover, recall from \cite{Street-Walters-1973} that the \emph{comprehensive} factorization system $(\mathsf{Fin}, \mathsf{dFib})$ on $\Cat$ has the final functors as left orthogonal to the discrete fibrations.  
But now note that the slice categories $\C/_c$ and $\D/_{Fc}$ have terminal objects, preserved by $F/_c: \C/_c \too \D/_{Fc}$.  It therefore suffices to observe that the inclusion functor $i : \{1_\C\} \hook \C$ of a terminal object is always final, so the same is true for $F/_c: \C/_c \too \D/_{Fc}$ by a familiar factorization property of the left maps in an orthogonal factorization system.
\end{proof}

\begin{proof}(of Proposition \ref{prop:universebasechange})
Evaluating \eqref{diagram:basechange6} at $c\in \C$ we obtain the following diagram of sets and functions.
\begin{equation}\label{diagram:basechange7}
\begin{tikzcd}
\Cat\big( {\D/_{Fc}}\,,\, \sset^{\mathsf{op}} \big) \ar[dd]   \ar[r, equals]  & (F^*\VV_D)c \ar[dd]  \ar[r, "{\dot{\gamma}_c}"] 
	& (\VV_\C)c \ar[dd]  \ar[r, equals] & \Cat\big( {\C/_c}\,,\, \sset^{\mathsf{op}} \big) \ar[dd]    \\
&&&\\
 \Cat\big( {\D/_{Fc}}\,,\, \set^{\mathsf{op}} \big)   \ar[r, equals]  & (F^*\V_\D)c \ar[r,swap, "{\gamma_c}"] 
 	& (\V_\C)c \ar[r, equals] & \Cat\big( {\C/_c}\,,\, \set^{\mathsf{op}} \big)  
\end{tikzcd}
\end{equation}
The outer square of this diagram is a pullback exactly if every square as follows has a diagonal filler.
\begin{equation}\label{diagram:basechange8}
\begin{tikzcd}
\C/_c \ar[d,swap, "F/_c"]  \ar[r] & \sset^{\mathsf{op}}  \ar[d]  \\
\D/_{Fc} \ar[r] \ar[ru, dotted] & \set^{\mathsf{op}}
\end{tikzcd}
\end{equation}
Since the map on the right is the universal small discrete fibration, this condition obtains just in case the map on the left is left orthogonal with respect to all small discrete fibrations (for ``small'' sufficiently large), which holds just if it is a final functor, by the comprehensive factorization system \cite{Street-Walters-1973}. But by Lemma \ref{lemma:locallyfinal}, this is the case for every $c\in\C$.
\end{proof}

\subsection*{7. Classifying fibrations}

Given the universal small family $\VV\to \V$ of Proposition \ref{prop:Valphaclassifies}, we can construct a classifier $\UU\to\U$ for any \emph{structured notion of fibration} using the method of \emph{classifying types} developed in \cite{AwodeyCCMC}.  We then consider the behavior of such universal maps under base change.

Suppose that for each object $X$ and family $A\to X$, there is an object $\Fib(A) \ra X$ over $X$ that classifies fibration structures on $A\to X$, in the sense that there is a bijection, natural in $X$, between sections of $\Fib(A) \ra X$ and (some notion of) fibration structures on $A\to X$.
Naturality in $X$ means that $\Fib(A) \ra X$ is stable under pullback, in the sense that for any $f:Y\ra X$, we have two pullback squares:
\begin{equation}\label{diagram:fibstable}
\xymatrix{
f^*A \ar[d] \ar[r]  & A \ar[d]\\
Y \ar[r]_{f} &X\\
\Fib(f^*A) \ar[u] \ar[r] & \Fib(A) \ar[u].
}
\end{equation}
Thus,
\[
\Fib(f^*A) \cong f^* \Fib(A)\,.
\]

It then follows from Proposition \ref{prop:Valphaclassifies} that, if $A\ra X$ is small, then $\Fib(A) \ra X$ is itself a pullback of the analogous object $\Fib(\VV) \ra \V$ constructed from the universal small family $\VV\ra\V$. So again there are two pullback squares,
\begin{equation}\label{diagram:tfib1}
\xymatrix{
A \ar[d] \ar[r]  & \VV \ar[d]\\
X \ar[r] & \V\\
\Fib(A)\ar[u] \ar[r] & \Fib(\VV)\ar[u]\,
}
\end{equation}
where the indicated map $X \to \V$ is the canonical classifier of $A\to X$.

\begin{proposition}\label{prop:classTFib}
There is a \emph{universal small fibration}, 
\[
\UU\too\U.
\]
 Every small fibration $A \ra X$ is a pullback of $\UU\ra\U$ along a canonical classifying map $X\ra \U$.
\begin{equation}\label{diagram:classifytf}
\xymatrix{
A \ar[d] \ar[r]  \xypbcorner & \UU\ar[d]\\
X \ar[r] & \U
}
\end{equation}
\end{proposition}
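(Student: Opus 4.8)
The plan is to define $\U := \Fib(\VV)$, with projection $p : \U \to \V$, and to set $\UU := p^*\VV$, so that $\UU \to \U$ is obtained by pulling back the universal small family $\VV \to \V$ along $p$; being a pullback of a small map, $\UU \to \U$ is automatically small. First I would record the two inputs already in hand. By Proposition~\ref{prop:Valphaclassifies} the underlying small family of any small fibration $A \to X$ is classified by a map $\tilde A : X \to \V$ with $A \cong \tilde A^*\VV$; and by the pullback-stability \eqref{diagram:fibstable} of the fibration-structure classifier, $\Fib(f^*A) \cong f^*\Fib(A)$. Specializing the latter to the universal family, as in \eqref{diagram:tfib1}, gives $\Fib(A) \cong \tilde A^*\Fib(\VV) = \tilde A^*\U$ for every small $A \to X$.

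Next I would exhibit the universal fibration structure on $\UU \to \U$. Since $\UU = p^*\VV$, stability gives $\Fib(\UU) \cong p^*\Fib(\VV) = p^*\U = \U\times_\V\U$, and this carries the diagonal section $\Delta : \U \to \U\times_\V\U$ corresponding to $1_\U$. Transporting $\Delta$ back along the stability isomorphism yields a section of $\Fib(\UU)\to\U$, i.e.\ a fibration structure on $\UU\to\U$; this is the tautological structure that makes $\U=\Fib(\VV)$ do its job, since a point of $\Fib(\VV)$ over a family is by definition a fibration structure on it.

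For the classification, given a small fibration $A\to X$ with structure $s$ I would proceed in two steps. The family $A\to X$ yields $\tilde A : X\to\V$ as above. The structure $s$ is a section of $\Fib(A)\to X$; transporting along $\Fib(A)\cong\tilde A^*\U$ turns $s$ into a map $\hat A : X\to\U$ with $p\circ\hat A=\tilde A$. It then suffices to compute
\[
\hat A^*\UU \;=\; \hat A^*p^*\VV \;=\; (p\hat A)^*\VV \;=\; \tilde A^*\VV \;\cong\; A,
\]
which exhibits $A\to X$ as the pullback of $\UU\to\U$ along $\hat A$ and so yields the square \eqref{diagram:classifytf}.

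The step I expect to be the real work is checking that this correspondence respects fibration structures and not merely underlying families: one must verify that pulling back the tautological structure on $\UU\to\U$ along $\hat A$ returns the given structure $s$, and conversely that $\hat A$ is the unique (canonical) map over $\V$ whose associated section is $s$. This amounts to chasing the naturality of the bijection between sections of $\Fib(-)$ and fibration structures through the composite of stability isomorphisms $\Fib(A)\cong\tilde A^*\Fib(\VV)$ and $\Fib(\UU)\cong p^*\Fib(\VV)$; the outcome is that $s$ and the pullback of $\Delta$ agree under $\Fib(A)\cong\hat A^*\Fib(\UU)$, which is a diagram-chase rather than a new idea.
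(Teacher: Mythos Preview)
Your proposal is correct and follows essentially the same approach as the paper: define $\U := \Fib(\VV)$, set $\UU := p^*\VV$, obtain the canonical fibration structure on $\UU\to\U$ from the diagonal section of $\Fib(\UU)\cong\U\times_\V\U$, and classify a small fibration $A\to X$ by lifting its classifying map $\tilde A : X\to\V$ to $\U$ via the section of $\Fib(A)\cong\tilde A^*\U$. Your final paragraph on compatibility of fibration structures goes a bit beyond what the paper actually proves (the paper only exhibits the pullback square and the canonical structure on $\UU\to\U$, without verifying that the classifying map recovers the given structure), but it is a reasonable addendum and requires no new ideas.
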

\begin{proof}
We can take 
\[
\U := \Fib(\VV),
\]
which comes with its canonical projection $\Fib(\VV) \ra \V$ as in diagram \eqref{diagram:tfib1}.  Now define $\UU\ra\U$ by pulling back the universal small family,
\[
\xymatrix{
\UU \ar[d] \ar[r] \xypbcorner  & \VV \ar[d] \\
\U \ar[r] & \V.
}
\]
Consider the following diagram, in which all the squares (including the distorted ones) are pullbacks, with the outer vertical one coming from Proposition \ref{prop:Valphaclassifies} and the lower horizontal one from \eqref{diagram:tfib1}.
\begin{equation}\label{diagram:classifytf2}
\xymatrix{
&A \ar[ddd] \ar[rrr]   \ar@{.>}[rrd] &&& \VV\ar[ddd]\\
& && \UU \ar[d] \ar[ru]  &\\
\Fib(A) \ar[rd] \ar[rrr] |<<<<<<<<<<<<\hole  &&& \U \ar[rd] &\\
&X \ar[rrr]_{a} \ar@{.>}@/^1pc/[lu]^\alpha \ar@{.>}[rru]_{\alpha'} &&& \V.
}
\end{equation}
By assumption, a fibration structure $\alpha$ on $A\ra X$ is a section of $\Fib(A)\to X$, which is the pullback of $\Fib(\VV)=\U$ along the classifying map $a: X \to \V$,
\[
\Fib(A) =  a^*\Fib(\VV) = a^*\U\,.
\]
Such sections therefore correspond uniquely to factorizations $\alpha'$ of $a : X\to\V$, as indicated, which in turn induce pullback squares of the required kind \eqref{diagram:classifytf}.

Finally, observe that the map $\UU\to\U$ has a canonical fibration structure. Indeed, consider the following diagram, in which both squares are pullbacks.
\begin{equation}\label{diagram:fibisfib}
\xymatrix{
\UU \ar[d] \ar[r]  & \VV \ar[d]\\
\U \ar[r] & \V\\
\Fib(\UU) \ar[u] \ar[r] & \Fib(\VV)\ar[u].
}
\end{equation}
Since $\Fib(\VV)$ is the object of fibration structures on $\VV\ra\V$, its pullback $\Fib(\UU)$ is the object of fibration structures on $\UU\ra\U$.
But since $\U = \Fib(\VV)$ by definition, the lower square is the pullback of $\Fib(\VV)\ra \V$ against itself, which does indeed have a distinguished section, namely the diagonal
\[
\Delta : \Fib(\VV) \ra \Fib(\VV)\times_\V\Fib(\VV).
\]
\end{proof}

\subsection*{8. Change of base for universal fibrations}

Now let  $F : \C \to \D$ in $\Cat$ with $\VV_\C\to \V_\C$ in $\widehat{\C}$ and $\VV_\D\to \V_\D$ in~$\widehat{\D}$, related by the base change geometric morphism $$F_!\dashv F^*\dashv F_* : \widehat{\C} \too \widehat{\D}$$ as in Proposition \ref{prop:universebasechange}, and suppose that we have a structured notion of fibration in $\widehat\C$,  classified by $\UU_\C \to \U_\C$ as in Proposition \ref{prop:classTFib}.  Then we can can transfer a structured notion of fibration to $\widehat{\D}$ by declaring $B\to Y$ to be a fibration in $\widehat{\D}$ just if $F^*B\to F^*Y$ is one in $\widehat\C$; more precisely, we define a \emph{fibration structure} on $B\to Y$ to be one on $F^*B\to F^*Y$.  These structures are classified in $\widehat{\D}$ as follows.

Fibration structures on $F^*B\to F^*Y$ are classified in $\widehat{\C}$ by sections $\sigma$ of $\Fib(F^*B)\to F^*Y$ ,
\begin{equation}\label{diagram:changebasefib1}
\xymatrix{
& F^*B \ar[dd] \\
\Fib(F^*B) \ar[rd]  & \\
& F^*Y  \ar@{.>}@/^1pc/[lu]^\sigma .
}
\end{equation}
Applying the right adjoint $F_*$ and pulling back along the unit $\eta$, we obtain an object over $Y$ that classifies such sections.
\begin{equation}\label{diagram:changebasefib1.5}
\Fib(B) := \eta^*F_*\Fib(F^*B) \too Y
\end{equation}

\begin{equation}\label{diagram:changebasefib2}
\xymatrix{
& B \ar[dd] && F_*F^*B \ar[dd] \\
\eta^*F_*\Fib(F^*B) \ar[rr] \ar[rd]   && F_*\Fib(F^*B) \ar[rd]  & \\
& Y  \ar@{.>}@/^1pc/[lu] \ar[rr]_\eta  \ar@{.>}[ru] &&  F_*F^*Y
}
\end{equation}
Indeed, sections of $\Fib(B) \to Y$ correspond bijectively to lifts of the unit $\eta$ across the image $F_*\Fib(F^*B) \to F_*F^*Y $ under $F_*$, which are exactly sections of $\Fib(F^*B) \to F^*Y $.

As before, it suffices to do this construction once in the ``universal case'', %
\begin{equation}\label{diagram:changebasefib3}
\xymatrix{
& \VV_\D \ar[dd] && F_*F^*\VV_\D \ar[dd] \\
\eta^*F_*\Fib(F^*\VV_\D) \ar[rr] \ar[rd]   && F_*\Fib(F^*\VV_\D) \ar[rd]  & \\
& \V_\D   \ar[rr]_\eta  &&  F_*F^*\V_\D
}
\end{equation}
to obtain the classifying type for fibrations in $\widehat\D$ as (the domain of) the indicated map
\begin{equation}\label{diagram:changebasefib4}
\U_\D := \eta^*F_*\Fib(F^*\VV_\D) \too \V_\D\,.
\end{equation}

\begin{proposition}
Let $\UU_\D \to \U_\D$ be the pullback indicated below,
\[
\xymatrix{
\UU_\D \ar[d] \ar[r] \xypbcorner  & \VV_\D \ar[d] \\
\U_\D \ar[r] & \V_\D.
}
\]
where $\U_\D \to  \V_\D$ is as defined in \eqref{diagram:changebasefib4}.
Then $\UU_\D \to \U_\D$ classifies fibrations in~$\widehat\D$. 
\end{proposition}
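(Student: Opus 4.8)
The plan is to recognize $(\U_\D,\UU_\D)$ as the output of the classifying-type machinery of Proposition~\ref{prop:classTFib}, applied to $\widehat\D$ with the \emph{transferred} notion of fibration in place of a given one. Write $\Fib_\C$ for the pullback-stable classifier of fibration structures on $\widehat\C$ assumed in \eqref{diagram:fibstable}, and for $B\to Y$ in $\widehat\D$ set $\Fib_\D(B):=\eta^*F_*\Fib_\C(F^*B)$ as in \eqref{diagram:changebasefib1.5}. Since $\VV_\D\to\V_\D$ is the universal small family of Proposition~\ref{prop:Valphaclassifies} for $\D$, since $\U_\D=\Fib_\D(\VV_\D)$ by \eqref{diagram:changebasefib4}, and since $\UU_\D$ is by hypothesis the pullback of $\VV_\D$ along $\U_\D\to\V_\D$, it will suffice to verify that $\Fib_\D$ meets the two hypotheses feeding Proposition~\ref{prop:classTFib}: that sections of $\Fib_\D(B)\to Y$ are exactly the fibration structures on $B\to Y$, and that $\Fib_\D$ is stable under pullback in $\widehat\D$.

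For the first hypothesis I would simply invoke the discussion after \eqref{diagram:changebasefib2}: by the adjunction $F^*\dashv F_*$, sections of $\eta^*F_*\Fib_\C(F^*B)$ over $Y$ correspond to lifts of $\eta_Y$ through $F_*\Fib_\C(F^*B)\to F_*F^*Y$, hence by transposition to sections of $\Fib_\C(F^*B)\to F^*Y$, which are by definition the fibration structures on $B\to Y$. Here $F^*$ preserves fibers, so smallness of $B\to Y$ passes to $F^*B\to F^*Y$ and $\Fib_\C(F^*B)$ is indeed defined.

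The substance of the argument, and the step I expect to be the main obstacle, is the second hypothesis, where the Beck--Chevalley interplay of $F^*$, $F_*$ and the unit $\eta$ is needed. Given $g:Z\to Y$ in $\widehat\D$, I would chain four isomorphisms. First, $F^*$ is a left adjoint, so $F^*(g^*B)\cong(F^*g)^*F^*B$; then pullback stability of $\Fib_\C$ gives $\Fib_\C((F^*g)^*F^*B)\cong(F^*g)^*\Fib_\C(F^*B)$. Next, $F_*$ is a right adjoint, so applying it to the defining pullback square yields $F_*\big((F^*g)^*\Fib_\C(F^*B)\big)\cong(F_*F^*g)^*F_*\Fib_\C(F^*B)$. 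Finally, naturality of the unit gives $\eta_Y\circ g=(F_*F^*g)\circ\eta_Z$, so pasting pullbacks gives $g^*\eta_Y^*\cong\eta_Z^*(F_*F^*g)^*$. Chaining these,
\[
g^*\Fib_\D(B)=g^*\eta_Y^*F_*\Fib_\C(F^*B)\ \cong\ \eta_Z^*F_*\Fib_\C(F^*(g^*B))=\Fib_\D(g^*B),
\]
which is the required stability.

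With both hypotheses in hand, the proof of Proposition~\ref{prop:classTFib} transfers verbatim with $\C,\Fib_\C$ replaced by $\D,\Fib_\D$: for a small $B\to Y$ with classifying map $b:Y\to\V_\D$ (so $B\cong b^*\VV_\D$), pullback stability gives $\Fib_\D(B)\cong b^*\U_\D$, so a fibration structure on $B\to Y$ is a section of $b^*\U_\D\to Y$, equivalently a factorization of $b$ through $\U_\D\to\V_\D$, and such a factorization induces the classifying pullback of $\UU_\D\to\U_\D$. The same transfer also equips $\UU_\D\to\U_\D$ with its canonical fibration structure via the diagonal of $\U_\D\times_{\V_\D}\U_\D$. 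Thus $\UU_\D\to\U_\D$ classifies fibrations in $\widehat\D$, and the only genuinely new computation is the pullback-stability chain above.
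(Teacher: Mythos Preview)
Your proof is correct and takes essentially the same route as the paper's: both reduce to Proposition~\ref{prop:classTFib} by checking that $\Fib_\D$ is pullback-stable via naturality of $\eta$, preservation of pullbacks by $F^*$ and $F_*$, and stability of $\Fib_\C$ in $\widehat\C$---the paper packages these ingredients as a cube diagram over the canonical classifying map $b:Y\to\V_\D$ (which already suffices), while you give the equivalent chain of isomorphisms for an arbitrary $g:Z\to Y$. One slip to correct: you write ``$F^*$ is a left adjoint, so $F^*(g^*B)\cong(F^*g)^*F^*B$'', but left adjoints preserve \emph{colimits}; the reason $F^*$ preserves pullbacks is that it is also a \emph{right} adjoint (to $F_!$), or simply that precomposition preserves all limits pointwise.
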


\begin{proof}
Proposition \ref{prop:classTFib} applies once we know that $\Fib(-)$ as defined in \eqref{diagram:changebasefib1.5} is stable under pullback.  It clearly suffices to show that, for each $B\to Y$, the horizontal square below is a pullback when $b : Y\to \V_\D$ is the canonical classifying map.
\begin{equation}\label{diagram:changebasefib5}
\xymatrix{
& B \ar[dd]  \ar[rr] \xypbcorner && \VV_\D \ar[dd] \\
\Fib(B) \ar[rr] \ar[rd]   && \Fib(\VV_\D) \ar[rd]  & \\
& Y  \ar[rr]_b  &&  \V_\D
}
\end{equation}

\[
\Fib(B) \cong b^*\Fib(\VV_\D)
\]
Inspecting the definitions \eqref{diagram:changebasefib1.5} and \eqref{diagram:changebasefib4}, this follows from the naturality of the units $\eta$,
\begin{equation}\label{diagram:changebasefib5.1}
\xymatrix{
Y  \ar[d]_b \ar[rr]^\eta && F_*F^*Y \ar[d]^{F_*F^*b} \\
\U_\D \ar[rr]_\eta && F_*F^*\V_\D.
}
\end{equation}
Indeed, consider the following cube, in which the front face is the naturality square \eqref{diagram:changebasefib5.1} and the top and bottom faces are the defining pullbacks \eqref{diagram:changebasefib2} and \eqref{diagram:changebasefib3} of $\Fib(B)$ and $\U_\D = \Fib(\VV_\D)$ respectively.
\begin{equation}\label{diagram:changebasefib5.5}
\xymatrix{
\Fib(B) \ar@{=}[r] \ar[ddd] & \eta^*F_*\Fib(F^*B) \ar[ddd] \ar[rr] \ar[rd]   && F_*\Fib(F^*B) \ar[ddd] |<<<<<<<<<<<<\hole  \ar[rd]  & \\
&& Y \ar[ddd]_b  \ar[rr]^<<<<<<<<<<<<<<{\eta} && F_*F^*Y \ar[ddd]^{F_*F^*b} \\
&&&& \\
\Fib(\VV_\D) \ar@{=}[r] & \eta^*F_*\Fib(F^*\VV_\D) \ar[rr] |<<<<<<<<<<<<\hole  \ar[rd]   && F_*\Fib(F^*\VV_\D) \ar[rd]  & \\
&& \V_\D   \ar[rr]_\eta &&  F_*F^*\V_\D
}
\end{equation}
It suffices to show that the right face is a pullback, and since $F_*$ preserves pullbacks it therefore suffices to show that the horizontal square below is a pullback.
\begin{equation}\label{diagram:changebasefib8}
\xymatrix{
&F^*B \ar[dd] \ar[rr]  \xypbcorner && F^*\VV_\D\ar[dd]\\
\Fib(F^*B) \ar[rd] \ar[rr] |<<<<<<<<<<<<<<\hole  && \Fib(F^*\VV_\D) \ar[rd] &\\
&F^*Y \ar[rr]_{F^*b}  && F^*\V_\D.
}
\end{equation}
But that follows from the stability of $\Fib$ in $\widehat\C$ and preservation by $F^*$ of the classifying pullback square for $B\to Y$.
\end{proof}

\subsection*{Acknowledgement}

Thanks to Mathieu Anel, Thierry Coquand, Marcelo Fiore, and Emily Riehl for discussions, and to Evan Cavallo, Ivan Di Liberti, and Taichi Uemura for help with the references. This material is based upon work supported by the Air Force Office of Scientific Research under awards FA9550-21-1-0009 and FA9550-20-1-0305.

\bibliographystyle{alpha}
\bibliography{../references}

\begin{thebibliography}{AGH21}

\bibitem[AGH21]{AGH}
S.~Awodey, N.~Gambino, and S.~Hazratpour.
\newblock {K}ripke-{J}oyal forcing for type theory and uniform fibrations,
  October 2021.
\newblock Preprint available as \url{https://arxiv.org/abs/2110.14576}.

\bibitem[Awo23]{AwodeyCCMC}
Steve Awodey.
\newblock Cartesian cubical model categories, May 2023.
\newblock Preprint available as \url{https://arxiv.org/abs/2305.00893}.

\bibitem[Gro83]{G:1983}
Alexander Grothendieck.
\newblock Pursuing stacks.
\newblock 1983.
\newblock Unpublished.

\bibitem[GSS22]{GSS:22}
Daniel Gratzer, Michael Shulman, and Jonathan Sterling.
\newblock Strict universes for grothendieck topoi.
\newblock {\em arXiv preprint arXiv:2202.12012}, 2022.

\bibitem[HS97]{HS:1997}
Martin Hofmann and Thomas Streicher.
\newblock Lifting {G}rothendieck universes.
\newblock Spring 1997.
\newblock Unpublished.

\bibitem[Shu15]{Shu:15}
Michael Shulman.
\newblock The univalence axiom for elegant reedy presheaves.
\newblock {\em Homology, Homotopy and Applications}, 17(2):81--106, 2015.

\bibitem[SW73]{Street-Walters-1973}
Ross Street and R.~F.~C. Walters.
\newblock The comprehensive factorization of a functor.
\newblock {\em Bull. Amer. Math. Soc.}, 79(5):936 -- 941, 1973.

\bibitem[Web07]{W:2007}
Mark Weber.
\newblock Yoneda structures from 2-toposes.
\newblock {\em Applied Categorical Structures}, 15(3):259--323, 2007.

\end{thebibliography}

\end{document}